\newtheorem{theorem}{Theorem} [section]
\newtheorem{corollary}[theorem]{Corollary}
\newtheorem{lemma}[theorem]{Lemma}
\newenvironment {proof} {{\it Proof.}}{\hspace*{\fill}$\Box$\par\vspace{4mm}}
\begin{document}

\bibliographystyle{plain}

\title{Adjacent $q$-Cycles in Permutations}

 \author{Richard A. Brualdi\\
 Department of Mathematics\\
 University of Wisconsin\\
 Madison, WI 53706\\
 {\tt brualdi@math.wisc.edu}
 \and
 Emeric Deutsch\\
 Department of Mathematics\\
 Polytechnic Institute of NYU\\
 Brooklyn, NY 11201\\
 {\tt emericdeutsch@msn.com}
  }

\maketitle

 \begin{abstract}
We introduce  a new permutation statistic, namely, the number of cycles of length $q$ consisting of consecutive integers, and  consider the distribution of this statistic among the permutations of $\{1,2,\ldots,n\}$.  We determine explicit formulas, recurrence relations, and ordinary and exponential generating functions. A generalization to more than one fixed length is also considered.

\medskip
\noindent {\bf Key words:  permutations, derangements, adjacent cycles.}
 
 \smallskip
\noindent {\bf AMS subject classifications: 05A05, 05A10, 05A15 . } 
\end{abstract}

\section{Introduction}
Let $S_n$ denote the set of all permutations of $[n]=\{1,2,\ldots,n\}$.
We use the one-line notation for permutations but then write a permutation according to its standard (disjoint) cycle decomposition. For example,
\[\pi =432157869=(14)(23)(5)(678)(9)\]
is a permutation in $S_9$ decomposed into cycles of lengths 2,2,1,3,1. (We omit commas between the integers in a cycle unless it leads to ambiguity.) We standardize the cycle decomposition by writing, for each cycle, its smallest element first and then writing the cycles in order of their smallest element. Generalizing the notion of an adjacent transposition (a 2-cycle), we define an {\it adjacent $q$-cycle}, abbreviated A$q$C, to be a $q$-cycle in which the elements in the cycle form a consecutive set of integers:
\[ (a, a+1,a+2,\ldots,a+q-1).\]
The permutation $\pi$ above has two A1C's (the fixed points (5) and (9)), one A2C (the adjacent transposition
$(23)$), and one A3C ($(678)$).
 A permutation in $S_n$ is {\it adjacent $q$-cycle free}, abbreviated
{\it A$q$C-free}, provided that it does not have any AqC's.  For example,  $532146=(154)(23)(6)$ is A3C-free. An A1C-free permutation is just a usual derangement.

We introduce a new permutation statistic for $S_n$ by considering, for an integer $q$ with $1\le q\le n$, the {\it number of A$q$C's} contained in a permutation, and consider the distribution of this statistic on $S_n$.  More precisely, let
\[a(n,k)=|\{\pi\in S_n: \pi \mbox{ has exactly $k$ AqC's}\}|.\]
Let $r=\left\lfloor\frac{n}{q}\right\rfloor$. If $k>r$, then  $a(n,k)= 0$.  We are interested in the sequence of numbers
\begin{equation}\label{eq:def}
a(n,0),a(n,1), a(n,2),\ldots,a(n,r),\end{equation}
where note that, for simplicity,  the parameter $q$ is not reflected in the notation. For example,
if $n=3$ and $q=2$, then $r=1$ and we have
$a(3,0)=4$ and  $a(3,1)=2$, with these numbers coming from, respectively,  the permutations
\[(1)(2)(3), (13)(2), (123),(132)\mbox{ and }  (1)(23), (12)3.\]

If $q=1$, and so $r=n$, the sequence
(\ref{eq:def}) gives the number of permutations with a specified number of fixed points, and these numbers are the {\it rencontres numbers} (see p.~59 in \cite{riordan}):
\[a(n,0),a(n,1),a(n,2),\ldots,a(n,n) \quad (q=1).\]
The sequence of {\it derangement numbers} $d_0,d_1,d_2,\ldots$ (see e.g. Sec. 6.3 in \cite{rab}) is the sequence 
\[a(0,0),a(1,0),a(2,0), \ldots \quad (q=1)\]
and, as a simple combinatorial argument shows,
\[a(n,k)={n\choose k}d_{n-k} \quad (q=1).\]

We now summarize the contents of this paper.
In Section 2,  we derive  a formula for the numbers $a(n,k)$ and  use this formula to obtain  $a(n,k)$ from the numbers $a(n,k-1)$; we also discuss  the number of A$q$C-free permutations.
In Section 3, we derive a homogeneous recurrence relation for $a(n,k)$. In Section 4,  we determine the ordinary and exponential generating functions for the sequence (\ref{eq:def}). In Section 5, we obtain a formula for the number of permutations having a specified number of  adjacent cycles of each of several lengths. Finally, in Section 6,  we 
illustrate how the permanent function can be used to calculate some of our numbers. 

\section{A formula}

Again we fix $q$ throughout this section and suppress it in our notation.

\begin{theorem}\label{th:formula}
We have
\begin{equation}\label{eq:formula}
a(n,k)=\sum_{j=k}^{\left\lfloor\frac{n}{q}\right\rfloor}(-1)^{k+j}{j\choose k}
\frac{(n-(q-1)j)!}{j!}, \quad \left(k=0,1,\ldots, \left\lfloor\frac{n}{q}\right\rfloor\right).
\end{equation}
\end{theorem}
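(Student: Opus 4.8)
The plan is to prove the formula by inclusion--exclusion on the set of AqC's. The first observation is that the possible AqC's are easy to enumerate: for each starting value $a$ with $1\le a\le n-q+1$ there is exactly one AqC, namely the cycle $(a,a+1,\ldots,a+q-1)$, determined entirely by the interval $\{a,a+1,\ldots,a+q-1\}$ it occupies. I would then introduce the auxiliary quantity
\[
E(n,j)=\sum_{\pi\in S_n}\binom{A(\pi)}{j},
\]
where $A(\pi)$ denotes the number of AqC's of $\pi$; this counts the pairs consisting of a permutation $\pi$ together with a chosen set of $j$ of its AqC's. Grouping the permutations by their value of $A(\pi)=k$ immediately gives $E(n,j)=\sum_{k}\binom{k}{j}a(n,k)$, the forward binomial relation that I will later invert.

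The key computational step is to evaluate $E(n,j)$ directly. Since any $j$ AqC's occurring together in a single permutation are cycles of that permutation, they must be pairwise disjoint, which forces their length-$q$ intervals to be pairwise disjoint. Conversely, any choice of $j$ pairwise disjoint length-$q$ intervals in $[n]$ determines $j$ AqC's, and a permutation containing all of them is obtained by letting the remaining $n-qj$ elements permute freely, in $(n-qj)!$ ways. The number of ways to choose $j$ disjoint length-$q$ intervals in $[n]$ is a stars-and-bars count: regarding each block of $q$ consecutive positions as a single super-element to be interleaved with the $n-qj$ leftover positions gives $\binom{n-(q-1)j}{j}$. Hence
\[
E(n,j)=\binom{n-(q-1)j}{j}(n-qj)!=\frac{(n-(q-1)j)!}{j!},
\]
the last equality being a routine simplification of the binomial coefficient.

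Finally I would apply binomial inversion to $E(n,j)=\sum_{k\ge j}\binom{k}{j}a(n,k)$, which yields $a(n,k)=\sum_{j\ge k}(-1)^{j-k}\binom{j}{k}E(n,j)$, the standard inverse of the binomial transform. Substituting the closed form for $E(n,j)$, observing that the sum ranges over $k\le j\le\lfloor n/q\rfloor$ (the largest $j$ for which $j$ disjoint length-$q$ blocks fit in $[n]$), and using $(-1)^{j-k}=(-1)^{k+j}$, delivers exactly the claimed expression~(\ref{eq:formula}). The main obstacle I anticipate is the middle step: one must be careful to argue that simultaneous occurrence of $j$ AqC's is \emph{equivalent} to a choice of $j$ disjoint intervals (so that no double counting or over-restriction occurs), and to get the interval count $\binom{n-(q-1)j}{j}$ right; the inversion and algebraic simplification are then straightforward.
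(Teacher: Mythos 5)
Your proposal is correct and is essentially the paper's own proof in different notation: your quantity $E(n,j)$ is exactly the cardinality of the paper's multiset ${\cal M}_n^j$ (pairs consisting of a permutation together with $j$ chosen A$q$C's), your stars-and-bars count $\binom{n-(q-1)j}{j}(n-qj)!$ matches the paper's gap-counting argument, and your binomial inversion is the paper's inversion of the triangular system $\sum_{i\ge t}\binom{i}{t}a(n,i)=\frac{(n-(q-1)t)!}{t!}$. No gaps; the disjointness equivalence you flag as the delicate step is handled the same way in the paper.
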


\begin{proof}
We again set $r=\lfloor n/q\rfloor$, and we first determine a system of $r+1$ linear equations that the numbers $a(n,0),a(n,1),\ldots,a(n,r)$ satisfy.
This system is the triangular system:
\begin{eqnarray}\label{eq:newtriangle} \nonumber
{0\choose 0}a(n,0)+{1\choose 0}a(n,1)+{2\choose 0}a(n,2)+\cdots+
{r\choose 0}a(n,r)&=&\frac{n|}{0!}\\ \nonumber
{1\choose 1}a(n,1)+{2\choose 1}a(n,2)+\cdots+{r\choose 1}a(n,r)&=&
\frac{(n-(q-1))!}{1!}\\ \nonumber
{2\choose 2}a(n,2)+\cdots + {r\choose 2}a(n,r)&=&\frac{(n-2(q-1))!}{2!}
\\
\cdots &=& \cdots\\ \nonumber
{r\choose r}a(n,r)&=&\frac{(n-r(q-1))!}{r!}
\end{eqnarray}
To verify that the equations (\ref{eq:triangle}) hold, let $t$ be an integer with $0\le t\le r$ and let
${\cal M}_n^t$ be the multiset of permutations $\pi$ in $S_n$ obtained
by  (i) selecting $t$ A$q$C's, thereby filling in $tq$ of the positions in the 
one-line representation of $\pi$, and then (ii) filling in the remaining $n-tq$ 
positions in all possible ways. This indeed defines a multiset and not a set since, for instance, if $n=6$, $q=2$, and $t=2$, the permutation $214365$ occurs three times: 
choose in (i) any pair of the 2-cycles (12), (34), and (56) 
(say (12) and (56) giving $21- - 65$), and then
fill in the remaining two positions to get $214365$.

We now determine the cardinality of the multiset ${\cal M}_n^t$ by counting in two different ways, and then equate the two counts to yield (\ref{eq:newtriangle}). Suppose that $t$ A$Q$C's have been placed in the one-line representation of a permutation. Let $y_1,y_2,\ldots,y_{t+1}$ be the number of contiguous empty positions before, in-between, and after the occupied positions. For example, taking  $n=13$, $q=2$, and $t=3$,  if the selected $t=3$ A$2$C's are $(45)$,  $(67)$, and $(ab)$,  where $a=12$ and $b=13$, then we have
\[- - - 5 4 7 6 - - - - b a,\]
and therefore, $y_1=3, y_2=0,y_3=4$ and $y_4=0$.
In general, we have
\[y_1+y_2+\cdots+y_{t+1}=n-tq\]
and this equation has
\[{{n+t-tq}\choose t}\]
nonnegative integral solutions (see e.g. Sec 2.5 in  \cite{rab}).
Thus
\[|{\cal M}_n^t|={{n+t-tq}\choose t} (n-tq)!= \frac{(n-t(q-1))!}{t!}
\quad (t=0, 1, \ldots, r).\]

For $i=t,t+1,\ldots,r$, let ${\cal M}_n^t{(i)}$ be the multisubset of ${\cal M}_n^t$ consisting of all those permutations $\pi$ with exactly $i$ A$q$C's. Each permutation in ${\cal M}_n^t{(i)}$ has $t$ initially placed A$q$C's and $i-t$ additional A$Q$C's obtained by filling in the remaining positions. Such a permutation occurs $i\choose t$ times, since any $t$ of the $i$ A$q$C's can be the initially placed A$q$C's.
Thus
\[|{\cal M}_n^t{(i)}|={i\choose t}a(n,i).\]
Since
\[|{\cal M}_n^t|=\sum_{i=t}^r|{\cal M}_n^t{(i)}|,\]
we have
\[\sum_{i=t}^r {i\choose t}a(n,i)=\frac{(n-(q-1)t)!}{t!}\quad (t=0,1,\ldots, r),\]
the equations of the triangular system (\ref{eq:triangle}). 
The inverse of the coefficient matrix  $A=\left[{j\choose i}: 0\le i,j\le r\right]$ 
is $A^{-1}=\left[(-1)^{j+i}{j\choose  i}: 0\le i,j\le r\right]$, and this gives the
 formula (\ref{eq:formula}).
\end{proof}

Special cases of formula (\ref{eq:formula}) are 
the classical formulas for the derangement and rencontres
 numbers.

\begin{corollary} Let $q=1$.
The rencontres numbers $a(n,k)$ satisfy
\[a(n,k)={n\choose k}d_{n-k}\quad (k=0,1,\ldots,n).\]
\end{corollary}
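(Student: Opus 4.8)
The plan is to specialize the explicit formula (\ref{eq:formula}) to the case $q=1$ and then massage the resulting alternating sum into the known closed form for the derangement numbers. When $q=1$ we have $\lfloor n/q\rfloor=n$ and $q-1=0$, so the factor $(n-(q-1)j)!$ collapses to $n!$ and the upper limit of summation becomes $n$. Thus the formula reads
\[a(n,k)=\sum_{j=k}^{n}(-1)^{k+j}{j\choose k}\frac{n!}{j!}.\]

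Next I would rewrite ${j\choose k}\frac{1}{j!}=\frac{1}{k!\,(j-k)!}$ using ${j\choose k}=\frac{j!}{k!\,(j-k)!}$, factor the $j$-independent quantity $n!/k!$ out of the sum, and change the summation index to $i=j-k$, so that $i$ runs from $0$ to $n-k$. Since $(-1)^{k+j}=(-1)^{2k+i}=(-1)^{i}$, this transforms the sum into
\[a(n,k)=\frac{n!}{k!}\sum_{i=0}^{n-k}\frac{(-1)^{i}}{i!}.\]

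Finally I would invoke the classical explicit formula $d_m=m!\sum_{i=0}^{m}\frac{(-1)^{i}}{i!}$ for the derangement numbers, applied with $m=n-k$, to recognize that $\sum_{i=0}^{n-k}\frac{(-1)^{i}}{i!}=d_{n-k}/(n-k)!$. Substituting this back gives
\[a(n,k)=\frac{n!}{k!\,(n-k)!}\,d_{n-k}={n\choose k}d_{n-k},\]
as claimed.

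I do not anticipate any substantive obstacle: the argument is a direct algebraic simplification of (\ref{eq:formula}). The only steps requiring a moment's care are the index shift $i=j-k$ together with the bookkeeping of the sign $(-1)^{k+j}$, and recalling the closed-form expression for $d_m$; once these are in hand, the identity falls out immediately.
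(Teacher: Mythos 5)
Your proposal is correct and follows essentially the same route as the paper: specialize (\ref{eq:formula}) to $q=1$, simplify the binomial coefficient, shift the index to obtain $\frac{n!}{k!}\sum_{i=0}^{n-k}\frac{(-1)^i}{i!}$, and identify the inner sum with $d_{n-k}/(n-k)!$. The only cosmetic difference is that the paper derives the derangement formula as the $k=0$ case of the same computation rather than quoting it.
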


\begin{proof} Taking $q=1$  in (\ref{eq:formula}), 
\[a(n,k)=n!\sum_{j=k}^n (-1)^{k+j}\frac{1}{k!(j-k)!}\quad =
\frac{n!}{k!}\sum_{i=0}^{n-k} \frac{(-1)^i}{i!}.\]
If $k=0$, we get
\[d_n=a(n,0)=n!\sum_{i=0}^{n} \frac{(-1)^i}{i!}.\]
Thus, for the rencontres numbers we get again
\[a(n,k)={n\choose k}d_{n-k}.\]
\end{proof}

For each value of $q$, the numbers $a(n,k)$ determine a Pascal-like ``triangle.'' For instance, with $q=5$, $a(n,k)$ counts the number of permutations in $S_n$ with $k$ A5C's giving the following data:
\[\begin{array}{c||rrr}
n&k=\qquad 0&1&2\\ \hline\hline
0&1&0&0\\
1&1&0&0\\
2&2&0&0\\
3&6&0&0\\
4&24&0&0\\
5&119&1&0\\
6&718&2&0\\
7&5034&6&0\\
8&40296&24&0\\
9&362760&120&0\\
10&3628081&718&1\\
11&39911763&5034&3\\
12&478961292&40296&12\\
13&6226657980&362760&60\end{array}\]

In the following theorem, we establish a simple relationship 
between the entries of two consecutive columns of these triangles.

\begin{theorem}\label{th:triangle}
We have
\begin{equation}\label{eq:triangle}
a(n+q-1,k)=\left\{\begin{array}{ll}
\frac{1}{k} \left(a(n,k-1) +(-1)^{k+\frac{n}{q}}{{\frac{n}{q}}\choose {k-1}}\right),&\mbox{ if } q|n\\
\frac{1}{k}a(n,k-1), & \mbox{ otherwise.}
\end{array}\right.\end{equation}
\end{theorem}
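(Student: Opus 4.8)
The plan is to attack this directly from the closed formula (\ref{eq:formula}) rather than combinatorially, since the right-hand side of (\ref{eq:triangle}) is essentially the same alternating sum as $a(n,k-1)$ with a single shifted index. First I would write out $a(n+q-1,k)$ using (\ref{eq:formula}) and simplify the factorial argument by the identity $(n+q-1)-(q-1)j = n-(q-1)(j-1)$, which is the key observation that makes the summand of $a(n+q-1,k)$ match that of $a(n,k-1)$ after a reindexing.

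Next I would shift the summation index by setting $i=j-1$, so that the sum runs over $i$ from $k-1$ to $\lfloor (n+q-1)/q\rfloor-1$, with summand involving $\binom{i+1}{k}/(i+1)!$ together with $(n-(q-1)i)!$ and the sign $(-1)^{k+i+1}$. The algebraic heart is then the binomial identity $\binom{i+1}{k}\frac{1}{(i+1)!} = \frac{1}{k}\binom{i}{k-1}\frac{1}{i!}$, which pulls out the factor $1/k$ and, after noting $(-1)^{k+i+1}=(-1)^{(k-1)+i}$, turns the summand into exactly the summand of $a(n,k-1)$. At this point $a(n+q-1,k)$ equals $\frac1k$ times a sum that agrees term-by-term with $a(n,k-1)$ except possibly in its upper limit of summation.

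The whole theorem then reduces to comparing the two upper limits, and this is where the case split on divisibility enters --- the step I expect to be the main (though still elementary) obstacle. Writing $n=qm+s$ with $0\le s\le q-1$, I would check that $\lfloor (n+q-1)/q\rfloor-1 = \lfloor n/q\rfloor$ precisely when $1\le s\le q-1$, i.e.\ when $q\nmid n$; in that case the two sums coincide and we obtain $a(n+q-1,k)=\frac1k a(n,k-1)$, the second branch. When $q\mid n$ (so $s=0$ and $m=n/q$), the upper limit drops by one, so $a(n+q-1,k)$ is missing exactly the $i=m$ term of $a(n,k-1)$.

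Finally I would evaluate that missing top term explicitly. Because $n-(q-1)m = qm-(q-1)m = m = n/q$, the factorial $(n-(q-1)m)!=m!$ cancels the $1/m!$, leaving $(-1)^{(k-1)+m}\binom{m}{k-1}$. Restoring this term and using $-(-1)^{(k-1)+m}=(-1)^{k+n/q}$ yields $a(n+q-1,k)=\frac1k\bigl(a(n,k-1)+(-1)^{k+n/q}\binom{n/q}{k-1}\bigr)$, the first branch. The only delicate points are the correct bookkeeping of the floor function under the two congruence cases and the sign, so I would double-check those against the $q=5$ data table above.
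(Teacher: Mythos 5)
Your proposal is correct and follows essentially the same route as the paper's own proof: both apply the closed formula (\ref{eq:formula}), reindex via $j\mapsto j-1$ using $(n+q-1)-(q-1)j=n-(q-1)(j-1)$ and the identity $\binom{i+1}{k}\frac{1}{(i+1)!}=\frac{1}{k}\binom{i}{k-1}\frac{1}{i!}$, and then reduce everything to comparing the upper summation limits, with the $q\mid n$ case contributing the single extra term $(-1)^{k+n/q}\binom{n/q}{k-1}$. You have in fact supplied the ``elementary manipulation'' that the paper leaves implicit, and your floor-function and sign bookkeeping checks out.
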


\begin{proof}
We use the formula (\ref{eq:formula}) for $a(n,k)$ in Theorem \ref{th:formula} to get, after elementary manipulation and change of summation variable,
\begin{equation}\label{eq:triangle2} a(n+q-1,k)-\frac{1}{k}a(n,k-1)=
\sum_{i=k-1}^{\left\lfloor\frac{n-1}{q}\right\rfloor}\lambda(q,n,k,i)-
\sum_{i=k-1}^{\left\lfloor\frac{n}{q}\right\rfloor}\lambda(q,n,k,i),\end{equation}
where
\[\lambda(q,n,k,i)=\frac{(-1)^{i+k+1}\left(n-(q-1)i\right)!}{i!k}{i\choose{k-1}}.\]
If $q$ does not divide $n$, then the upper summation limits in (\ref{eq:triangle2}) are equal, implying that $a(n+q-1)=\frac{1}{k}a(n,k-1)$.
If $q$ divides $n$, then $\lfloor\frac{n}{q}\rfloor=\lfloor\frac{n-1}{q}\rfloor+1$, and we obtain
\[a(n+q-1,k)-\frac{1}{k}a(n,k-1)=\lambda(q,n,k,\left\lfloor\frac{n}{q}\right\rfloor)=
(-1)^{k+\frac{n}{q}}{{\frac{n}{q}}\choose {k-1}},\]
completing the proof.
\end{proof}

Let $b_n=a(n,0)$ be the {\it number of A$q$C-free permutations in $S_n$}.
If $q=1$, then $b_n=d_n$, the $n$th derangement number.  Thus the  numbers $b_n$ include the derangement numbers as a special case.
The formula (\ref{eq:formula}) gives the following formula for $b_n$.

\begin{corollary}\label{cor:OK}
\begin{equation}\label{eq:bformula}
b_n=\sum_{j=0}^{\left\lfloor\frac{n}{q}\right\rfloor}(-1)^{j}
\frac{(n-(q-1)j)!}{j!}.\end{equation}
\end{corollary}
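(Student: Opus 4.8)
The plan is to obtain this formula as the immediate specialization $k=0$ of the general formula (\ref{eq:formula}) established in Theorem \ref{th:formula}, since by definition $b_n = a(n,0)$. No new combinatorial or algebraic machinery is needed: the entire content is a one-line substitution together with two elementary simplifications of the summand.

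Concretely, I would start from
\[
a(n,k)=\sum_{j=k}^{\left\lfloor\frac{n}{q}\right\rfloor}(-1)^{k+j}{j\choose k}
\frac{(n-(q-1)j)!}{j!}
\]
and set $k=0$. The lower summation index becomes $j=0$, the sign factor $(-1)^{k+j}$ collapses to $(-1)^{j}$, and the binomial coefficient $\binom{j}{0}$ equals $1$ for every $j$ in the range. After these three simplifications the summand is exactly $(-1)^j (n-(q-1)j)!/j!$, and since $b_n=a(n,0)$ this is precisely the claimed identity (\ref{eq:bformula}).

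The only thing worth flagging is that there is no real obstacle here, as the corollary is a direct specialization of an already-proved theorem; the short proof merely records the substitution. One could optionally remark, as a sanity check, that for $q=1$ the formula reduces to $b_n=d_n=n!\sum_{i=0}^{n}(-1)^i/i!$, recovering the classical derangement count consistently with the $q=1$ case discussed after the preceding corollary.
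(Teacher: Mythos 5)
Your proof is correct and is exactly the paper's (implicit) argument: the corollary is stated as an immediate consequence of formula (\ref{eq:formula}) with $k=0$, where the binomial coefficient becomes $1$ and the sign collapses to $(-1)^j$. Nothing further is needed.
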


If $q=1$, then we get the following classical formula for the derangement numbers. 
\[b_n=n!\sum_{j=0}^n\frac{(-1)^j}{j!}=d_n.\]

Setting $k=1$ and  replacing $n$ by $n-q$ in (\ref{eq:triangle}),  we get the following relation for the  entries of column $k=1$  of our triangle in terms of the number of  A$q$C-free permutations.

\begin{corollary}\label{cor:triangle}
\begin{equation}\label{eq:triangle3}
a(n-1,1)=
\left\{\begin{array}{ll}
b_{n-q} +(-1)^{\frac{n}{q}}&\mbox{ if }q|n\\
b_{n-q}&\mbox{ otherwise.}
\end{array}\right.\end{equation}
\end{corollary}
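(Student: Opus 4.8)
The plan is to obtain this corollary as an immediate specialization of Theorem~\ref{th:triangle}, exactly as the surrounding text suggests: fix $k=1$ in (\ref{eq:triangle}) and then reindex. First I would set $k=1$. Since ${\frac{m}{q}\choose 0}=1$ and $a(m,0)=b_m$ by the definition of $b_m$, the relation (\ref{eq:triangle}) collapses to
\[a(m+q-1,1)=\left\{\begin{array}{ll} b_m+(-1)^{1+\frac{m}{q}}&\mbox{ if }q\mid m,\\ b_m&\mbox{ otherwise,}\end{array}\right.\]
where I have temporarily written the running variable as $m$ to keep the substitution transparent.

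Next I would put $m=n-q$, so that the left-hand side becomes $a(m+q-1,1)=a(n-1,1)$, which is the quantity to be evaluated. Two small checks then finish the argument. For the case hypothesis, $q\mid(n-q)$ holds if and only if $q\mid n$, because $q$ always divides itself; this turns the split on $q\mid m$ into the desired split on $q\mid n$. For the sign, in the divisible case $(n-q)/q=\frac{n}{q}-1$, so the exponent becomes $1+\frac{n-q}{q}=\frac{n}{q}$ and hence $(-1)^{1+\frac{n-q}{q}}=(-1)^{\frac{n}{q}}$. Substituting these into the displayed relation yields precisely (\ref{eq:triangle3}).

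This argument is pure bookkeeping with no substantive obstacle; the only point that warrants a moment's attention is the exponent simplification, where the $+1$ introduced by taking $k=1$ exactly cancels the $-1$ produced by lowering $n$ to $n-q$, leaving the clean sign $(-1)^{n/q}$. One should also confirm the harmless edge behaviour: when $q\nmid n$ the correction term never appears, matching the second branch directly.
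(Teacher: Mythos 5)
Your proposal is correct and is exactly the paper's own derivation: the text preceding the corollary simply says ``Setting $k=1$ and replacing $n$ by $n-q$ in (\ref{eq:triangle})'', and you have carried out that specialization with the same sign and divisibility bookkeeping spelled out. Nothing is missing.
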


 It is well known (see e.g. \cite{bona}, p. 107) that $\lim_{n\rightarrow\infty}\frac{d_n}{n!}=\frac{1}{e}$.
We determine the corresponding limit for $q\ge 2$ in the next theorem.

\begin{theorem}\label{th:limit}
Let $q\ge 2$ be given. Then $\lim_{n\rightarrow\infty} \frac{b_n}{n!}=1$.
\end{theorem}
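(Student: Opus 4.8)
The plan is to work directly from the explicit formula for $b_n$ in Corollary \ref{cor:OK} and to show that, after dividing by $n!$, every term except the $j=0$ term is negligible as $n\to\infty$. Writing
\[\frac{b_n}{n!}=\sum_{j=0}^{\lfloor n/q\rfloor}(-1)^j\frac{(n-(q-1)j)!}{j!\,n!},\]
the $j=0$ term is exactly $\frac{n!}{0!\,n!}=1$, so it suffices to prove that the tail $\sum_{j=1}^{\lfloor n/q\rfloor}(-1)^j\frac{(n-(q-1)j)!}{j!\,n!}$ tends to $0$. By the triangle inequality I would bound its absolute value by $\sum_{j=1}^{\lfloor n/q\rfloor}c_j$, where $c_j=\frac{(n-(q-1)j)!}{j!\,n!}=\frac{1}{j!}\cdot\frac{1}{(n)_{(q-1)j}}$ and $(n)_m=n(n-1)\cdots(n-m+1)$ denotes the falling factorial.

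The key estimate is a lower bound on $(n)_{(q-1)j}$. Since $j\le\lfloor n/q\rfloor\le n/q$, we have $(q-1)j\le(q-1)n/q$, so the smallest of its $(q-1)j$ factors satisfies $n-(q-1)j+1> n-(q-1)\tfrac{n}{q}=\tfrac{n}{q}$. Hence every factor exceeds $n/q$, giving
\[(n)_{(q-1)j}>\left(\frac{n}{q}\right)^{(q-1)j},\qquad\text{so}\qquad c_j<\frac{1}{j!}\left(\frac{q}{n}\right)^{(q-1)j}.\]

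This is exactly where the hypothesis $q\ge 2$ enters: it gives $(q-1)j\ge j$, so for $n\ge q$ (so that $q/n\le 1$) we may further bound $\left(\frac{q}{n}\right)^{(q-1)j}\le\left(\frac{q}{n}\right)^{j}$. Summing the resulting geometric-type bound over all $j\ge 1$ yields
\[\sum_{j=1}^{\lfloor n/q\rfloor}c_j<\sum_{j=1}^{\infty}\frac{1}{j!}\left(\frac{q}{n}\right)^{j}=e^{q/n}-1,\]
which tends to $0$ as $n\to\infty$. Therefore $\abs{\frac{b_n}{n!}-1}\le e^{q/n}-1\to 0$, proving the claim. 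I expect the only real obstacle to be the bookkeeping in the key estimate, namely verifying that the smallest factor of $(n)_{(q-1)j}$ stays above $n/q$ uniformly in $j$; once that uniform lower bound is secured, the geometric domination together with $q\ge 2$ finishes the argument immediately. Note that for $q=1$ the ratio $(n)_{(q-1)j}=(n)_0=1$ does not decay, which is consistent with the fact that then $b_n/n!\to 1/e$ rather than $1$.
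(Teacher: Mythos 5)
Your proof is correct, but it takes a genuinely different route from the paper's. You argue analytically from the explicit inclusion--exclusion formula of Corollary \ref{cor:OK}, isolating the $j=0$ term and dominating the tail by $e^{q/n}-1$ via the uniform lower bound $(n)_{(q-1)j}>(n/q)^{(q-1)j}$; the bookkeeping there is sound, since every factor of the falling factorial is at least $n-(q-1)\lfloor n/q\rfloor+1>n/q$. The paper instead gives a short, self-contained counting argument: the total number of A$q$C occurrences over all of $S_n$ is $(n+1-q)\cdot(n-q)!=(n+1-q)!$, so the number $c_n$ of permutations with at least one A$q$C satisfies $c_n\le(n+1-q)!$, whence $1-\frac{(n+1-q)!}{n!}\le\frac{b_n}{n!}\le 1$ and the limit follows for $q\ge 2$. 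The paper's argument needs no prior formula and in fact yields the sharper error bound $O(n^{-(q-1)})$, which matches the true order of $1-b_n/n!$ (the $j=1$ term of the sum); your bound $e^{q/n}-1=O(1/n)$ is cruder because you weaken $(q/n)^{(q-1)j}$ to $(q/n)^{j}$, though you could retain the exponent $(q-1)j$ and recover the same rate. On the other hand, your approach makes transparent exactly where $q\ge2$ enters and why $q=1$ behaves differently, yielding $1/e$ instead of $1$.
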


\begin{proof}
The total number of A$q$C's that occur in the permutations in $S_n$ equals $(n+1-q)!$. This is because there are  $n+1-q$ adjacent $q$-cycles 
that can be formed from $1,2,\ldots,n$ and each such cycle can be extended to $(n-q)!$ permutations in $S_n$. Let $c_n$ be the number of permutations in $S_n$ with at least one A$q$C. Then
$c_n\le (n+1-q)!$, and so
\[n!-(n+1-q)!\le n!-c_n=b_n\le n!.\]
Hence
\[1-\frac{(n+1-q)!}{n!}\le \frac{b_n}{n!}\le 1.\]
Since $q\ge 2$, the result follows.
\end{proof}

\section{Recurrence Relations}

In this section we determine a recurrence relation for the numbers $a(n,k)$. We first obtain a recurrence relation for the number $b_n=a(n,0)$
of A$q$C-free permutations of $\{1,2,\ldots,n\}.$

There is a well known recurrence relation satisfied by the derangement numbers $d_n$, namely, $d_n-nd_{n-1}=(-1)^n$. We now show that this is a special case of a recurrence relation satisfied
by $b_n$ in general.

\begin{theorem}
\label{th:recurrence}
Let $q\ge 1$ be given. Then
\begin{equation}\label{eq:recurrence}
b_n-nb_{n-1}=\left\{\begin{array}{ll}
(q-1)b_{n-q}+q(-1)^{\frac{n}{q}},&\mbox{ if $q | n$}\\
(q-1)b_{n-q},&\mbox{ otherwise}\end{array}\right. \quad (n\ge 1).\end{equation}
\end{theorem}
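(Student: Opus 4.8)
The plan is to prove the recurrence $b_n - nb_{n-1} = (q-1)b_{n-q} + \delta$ (where $\delta$ is the correction term) directly from the closed formula for $b_n$ in Corollary~\ref{cor:OK}, rather than by a combinatorial argument. This is attractive because we already have the explicit sum
\[
b_n = \sum_{j=0}^{\lfloor n/q\rfloor} (-1)^j \frac{(n-(q-1)j)!}{j!},
\]
so the whole identity becomes a question of comparing three such sums after suitable index shifts. First I would write out $b_n$, $nb_{n-1}$, and $(q-1)b_{n-q}$ as explicit sums. The key observation driving the algebra is that $(n-(q-1)j)! = (n-(q-1)j)\cdot(n-1-(q-1)j)!$, which lets me split the $j$th term of $b_n$ into a piece matching the $j$th term of $b_{n-1}$ (the factor $n-1-(q-1)j$ scaled appropriately) and a leftover piece. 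The leftover pieces should, after re-indexing $j \mapsto j-1$, reassemble into a multiple of the terms of $b_{n-q}$, since replacing $j$ by $j-1$ and $n$ by $n-q$ sends $n-(q-1)j$ to $(n-q)-(q-1)(j-1) = n-1-(q-1)j$, exactly the surviving factorial.

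Concretely, I would compute $b_n - nb_{n-1}$ term by term. For each $j$, the contribution is
\[
(-1)^j\frac{(n-(q-1)j)!}{j!} - n\cdot(-1)^j\frac{(n-1-(q-1)j)!}{j!},
\]
and factoring out $(-1)^j (n-1-(q-1)j)!/j!$ leaves the scalar $(n-(q-1)j) - n = -(q-1)j$. Thus the $j$th term collapses to $(-1)^{j+1}(q-1)\, j\, (n-1-(q-1)j)!/j! = (q-1)(-1)^{j+1}(n-1-(q-1)j)!/(j-1)!$. Reindexing with $i = j-1$ turns this into $(q-1)(-1)^i (n-1-(q-1)(i+1))!/i! = (q-1)(-1)^i ((n-q)-(q-1)i)!/i!$, which is precisely $(q-1)$ times the $i$th term of $b_{n-q}$. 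So the sums match term by term; the only subtlety is the summation limits.

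The main obstacle, and the source of the correction term, is exactly this bookkeeping of the upper limits of summation. The $j=0$ term of $b_n - nb_{n-1}$ vanishes (the factor $(q-1)j$ kills it), so the surviving sum starts at $j=1$, i.e. $i=0$, which aligns with $b_{n-q}$. But the upper limit of the $b_n$ sum is $\lfloor n/q\rfloor$ while $b_{n-q}$ runs to $\lfloor (n-q)/q\rfloor = \lfloor n/q\rfloor - 1$; these are compatible precisely when the top term of $b_n$ contributes to $b_{n-q}$ and no boundary term is orphaned. When $q \nmid n$ one checks $\lfloor n/q\rfloor = \lfloor (n-1)/q\rfloor$, so the shifted sums for $nb_{n-1}$ and $b_n$ share the same range and everything cancels cleanly, yielding $(q-1)b_{n-q}$. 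When $q \mid n$, however, $\lfloor n/q\rfloor = \lfloor(n-1)/q\rfloor + 1$, so $b_n$ has one extra top term (at $j = n/q$) with no partner in $nb_{n-1}$; isolating this stray term gives the extra $q(-1)^{n/q}$. I would therefore split the proof into the two cases according to whether $q \mid n$, and in the divisible case carefully evaluate the leftover boundary term: at $j = n/q$ the factorial $(n-(q-1)(n/q))! = (n/q)!$ produces $(-1)^{n/q}(n/q)!/(n/q)! = (-1)^{n/q}$, and tracking its coefficient through the algebra accounts for the $q(-1)^{n/q}$ summand. Verifying that this boundary contribution is exactly $q(-1)^{n/q}$, and not off by a factor, is the delicate computational step I would check most carefully.
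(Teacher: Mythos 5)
Your proposal takes essentially the same route as the paper: both verify the recurrence by direct substitution of the closed formula (\ref{eq:bformula}), using the factorization $(n-(q-1)j)!=(n-(q-1)j)\,(n-1-(q-1)j)!$ to collapse the $j$th term of $b_n-nb_{n-1}$ and the shift $i=j-1$ to recognize $(q-1)b_{n-q}$, with the case split on $q\mid n$ handled by the summation limits. On the one step you flagged as delicate: when $q\mid n$ the correction $q(-1)^{n/q}$ comes from \emph{two} boundary mismatches, not only the orphaned top term of $b_n$ (which contributes just $(-1)^{n/q}$) --- the remaining $(q-1)(-1)^{n/q}$ arises because your reindexed sum stops at $i=n/q-2$ while $b_{n-q}$ runs to $i=n/q-1$, so its top term $(q-1)(-1)^{n/q-1}$ must be subtracted off, exactly as in the paper's equation (\ref{eq:rec2}).
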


\begin{proof}
The proof follows by substitution of the formula for the numbers $b_n$  given in (\ref{eq:bformula}).
We verify that the recurrence relation (\ref{eq:recurrence}) holds
when $q|n$ and leave the verification to the reader if $q\not | n$.
Assume that $q|n$, and let $r=n/q$. Then by (\ref{eq:bformula}), we have
\[b_n=\sum_{j=0}^r (-1)^{j}
\frac{(n-(q-1)j)!}{j!}\mbox{ and }
b_{n-1}=\sum_{j=0}^{r-1}(-1)^{j}
\frac{(n-1-(q-1)j)!}{j!}.\]
Hence 
\begin{eqnarray}\label{eq:rec}
\nonumber
b_n-nb_{n-1}&=&(-1)^r\frac{(n-(q-1)r)!}{r!}+
\sum_{j=0}^{r-1}(-1)^{j}
\frac{(n-1-(q-1)j)! (n-(q-1)j-n)}{j!}\\
\nonumber
&=& (-1)^r-
(q-1)\sum_{j=1}^{r-1}(-1)^{j}
\frac{(n-1-(q-1)j)!}{(j-1)!}\\ 
&=& (-1)^r+
(q-1)\sum_{i=0}^{r-2}(-1)^{i}
\frac{(n-q-(q-1)i)!}{i!}.
 \end{eqnarray}
From (\ref{eq:bformula}) we also get
\begin{equation}\label{eq:rec2}
b_{n-q}=\sum_{j=0}^{r-1}(-1)^j\frac{(n-q-(q-1)j)!}{j!}=
(-1)^{r-1}+\sum_{j=0}^{r-2}(-1)^j\frac{(n-q-(q-1)j)!}{j!}.\end{equation}
Combining (\ref{eq:rec}) and (\ref{eq:rec2}), we get
\[b_n-nb_{n-1}=(-1)^r+(q-1)(b_{n-q}-(-1)^{r-1})=(q-1)b_{n-q}+q(-1)^r.\]
\end{proof}

We now derive a recurrence relation for the numbers $a(n,k)$.

\begin{theorem}\label{th:recurrence2}
For $k\ge 1$,
\begin{equation}\label{eq:recurrence2}
a(n+1,k)=a(n-q+1,k-1)+(n-qk+1)a(n,k)-a(n-q+1,k)+q(k+1)a(n,k+1).
\end{equation}
\end{theorem}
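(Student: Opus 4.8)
The plan is to verify (\ref{eq:recurrence2}) directly from the explicit formula (\ref{eq:formula}), in the same spirit as the proofs of Theorems \ref{th:triangle} and \ref{th:recurrence}. The strategy is to write each of the five quantities $a(n+1,k)$, $a(n-q+1,k-1)$, $a(n,k)$, $a(n-q+1,k)$, $a(n,k+1)$ as a sum over a single index and then match the two sides term by term. The essential observation is that all factorials can be brought to the common form $(n-(q-1)j)!$: in the two terms $a(n-q+1,k-1)$ and $a(n-q+1,k)$ I would substitute $i=j-1$, since
\[ n-q+1-(q-1)(j-1)=n-(q-1)j, \]
while in $a(n+1,k)$ I would factor $(n+1-(q-1)j)!=(n+1-(q-1)j)\,(n-(q-1)j)!$. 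After these shifts, every summand carries the common factor $(-1)^{k+j}(n-(q-1)j)!/j!$, where an extra factor of $j$ is produced whenever $1/(j-1)!$ is rewritten as $j/j!$.

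Once everything is indexed by the single variable $j$, the theorem reduces to a purely binomial identity in $j,k,n,q$, namely that the coefficients of $(-1)^{k+j}(n-(q-1)j)!/j!$ on the two sides agree:
\[ \binom{j}{k}\bigl(n+1-(q-1)j\bigr)=j\binom{j-1}{k-1}+j\binom{j-1}{k}+(n-qk+1)\binom{j}{k}-q(k+1)\binom{j}{k+1}. \]
I would establish this using Pascal's rule $\binom{j-1}{k-1}+\binom{j-1}{k}=\binom{j}{k}$ to collapse the first two terms into $j\binom{j}{k}$, together with the absorption identity $(k+1)\binom{j}{k+1}=(j-k)\binom{j}{k}$ to turn the last term into $q(j-k)\binom{j}{k}$. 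The right-hand side then factors as
\[ \binom{j}{k}\bigl(j+(n-qk+1)-q(j-k)\bigr)=\binom{j}{k}\bigl(n+1-(q-1)j\bigr), \]
which matches the left-hand side.

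The step I expect to be the main obstacle is not the coefficient identity but the bookkeeping of the summation limits, since the five sums do not run over the same range: after the shift, $a(n+1,k)$, $a(n-q+1,k-1)$, and $a(n-q+1,k)$ extend to $\lfloor(n+1)/q\rfloor$, whereas $a(n,k)$ and $a(n,k+1)$ only reach $\lfloor n/q\rfloor$. The lower limits give no trouble: wherever a shifted index falls below its nominal range, the corresponding binomial coefficient vanishes, so all sums may uniformly be taken from $j=k$. The upper limits coincide unless $q\mid(n+1)$, in which case $\lfloor(n+1)/q\rfloor=\lfloor n/q\rfloor+1$ and the top index $j=(n+1)/q$ is present only for the first three sums. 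I would dispose of this case by checking that at $j=(n+1)/q$ one has $n+1-(q-1)j=j$ and $(n-qk+1)-q(j-k)=n+1-qj=0$, so the two contributions $(n-qk+1)\binom{j}{k}$ and $-q(j-k)\binom{j}{k}$ coming precisely from the absent terms $a(n,k)$ and $a(n,k+1)$ cancel, and the identity continues to hold. Hence the term-by-term match is valid over the whole range, which establishes (\ref{eq:recurrence2}).
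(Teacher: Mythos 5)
Your proof is correct, but it takes a genuinely different route from the paper. The paper argues combinatorially: it builds each permutation of $S_{n+1}$ with $k$ A$q$C's by inserting $n+1$ into a permutation of $S_n$ (either into an existing cycle or as a new fixed point), and splits into cases according to whether the insertion destroys an A$q$C of a $\pi_{n,k+1}$, preserves the count of a $\pi_{n,k}$ (with a subcase depending on whether $n$ sits in an adjacent $(q-1)$-cycle, which is what produces the $-a(n-q+1,k)$ correction), or completes a new A$q$C from a $\pi_{n,k-1}$; summing the three contributions gives (\ref{eq:recurrence2}). You instead verify the recurrence algebraically from the closed form (\ref{eq:formula}), and your execution is sound: the index shift $i=j-1$ in the two $a(n-q+1,\cdot)$ terms does align all factorials at $(n-(q-1)j)!$, your coefficient identity follows from Pascal's rule and the absorption identity exactly as you say, the lower limits are handled by vanishing binomial coefficients, and your treatment of the extra top index $j=(n+1)/q$ when $q\mid(n+1)$ (where $n+1-(q-1)j=j$ and the two would-be contributions from $a(n,k)$ and $a(n,k+1)$ cancel) is precisely the right boundary check. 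The paper's argument explains \emph{why} the recurrence holds and needs no prior formula, whereas yours is a mechanical but airtight verification in the same spirit as the paper's own proofs of Theorems \ref{th:triangle} and \ref{th:recurrence}; given that the paper's case analysis as printed contains some slips (e.g.\ the displayed subtotal $(1+n-qk)(a(n,k)-a(n-q+1,k))$ for the middle case and the expression $a(n-k+1,k-1)$ in (\ref{eq:sum3})), your computation is a worthwhile independent confirmation.
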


\begin{proof}
We denote a permutation in $S_n$ with $k$ A$q$C's by $\pi_{n,k}$.
The permutations in $S_{n+1}$ can be gotten from the permutations in $S_n$ by inserting $n+1$ in any of the $n+1$ places before, between, and after the  $n$ entries. If one uses the cycle decomposition of a permutation,  the permutation of $S_{n+1}$ can be gotten from the permutations in $S_n$ by inserting $n+1$ in any of the places after the entries of each cycle, or by creating a new cycle of length 1 (a fixed point).  A permutation $\pi_{n+1,k}$  can be gotten 
in this way only from permutations $\pi_{n,k+1}$ (by killing one of the A$q$C's), $\pi_{n,k}$ (by not killing its A$q$C's  and not creating any new A$q$C),  and $\pi_{n,k-1}$ (by not killing its A$q$C's and creating one new A$q$C).
We now compute the number of $\pi_{n+1,k}$'s obtained in each of these three ways.

\smallskip\noindent
{\it Case $\pi_{n+1,k}$'s from $\pi_{n,k+1}$'s}: There are $k+1$ $q$-cycles, each of which can be killed (by making it into a $(q+1)$-cycle) in $q$ ways. In this way we get 
\begin{equation}\label{eq:sum1}
q(k+1)a(n,k+1)\end{equation}
 permutations with $k$ A$q$C's.

\smallskip\noindent Case  {\it $\pi_{n+1,k}$'s from $\pi_{n,k}$'s}: Here we have to 
distinguish two possibilities according to whether or not $n$ belongs to an A$(q-1)$C. Assume first that $n$ belongs to an A$(q-1)$C, so that 
 $q\ge 2$ and  the A$(q-1)$C  containing $n$ is $(n-q+2,n-q+3,\ldots,n)$.  Then $n+1$ can be inserted as a cycle of length 1, or after each entry not in the $k$ A$q$C's except after the last entry of  $(n-q+2,n-q+3,\ldots,n)$.
 Thus, for each such $\pi_{n,k}$ we get
$(1+n-qk-1)$ permutations with $k$ A$q$C's, giving a total of
\[(n-qk)a(n-q+1,k))\]
permutations with $k$ A$q$C's.

If $n$ does not belong to an A$(q-1)$C,  then the same reasoning, but without the exception, gives that there are $1+(n-qk)$ positions in which to insert $n+1$ to get a permutation with $k$  A$q$Cs. In this case we get
\[(1+n-qk)(a(n,k)-a(n-q+1,k)\]
permutations with $k$
A$q$C's.

Adding these two numbers we get in this case 
\begin{equation}\label{eq:sum2}
(1+n-qk)(a(n,k)-a(n-q+1,k))\end{equation}
permutations with $k$ A$q$C's.

\smallskip\noindent 
{\it Case $\pi_{n+1,k}$'s from $\pi_{n,k-1}$'s}:  Now we have to create a new
A$q$C by inserting $n+1$ in a $\pi_{n,k-1}$, and this means that $(n-q+2,n-q+3,\ldots,n)$ must be a $(q-1)$-cycle in $\pi_{n,k-1}$ and we have to insert $n+1$ immediately after $n$. Thus in this case we get
\begin{equation}\label{eq:sum3}
a(n-k+1,k-1)
\end{equation}
permutations with $k$ A$q$C's. Adding (\ref{eq:sum1}), (\ref{eq:sum2}), and (\ref{eq:sum3}), we obtain the recurrence (\ref{eq:recurrence}).

\end{proof}

\section{Generating functions for A$q$C-free permutations}

The  (ordinary)  generating function $g(z)$ for derangements (A$1$C-free permutations) is known (see p.~199, Exercise 4 in \cite{comtet}) to satisfy the differential equation
\[z^2(1+z)g'(z)-(1-z^2)g(z)+1=0, \quad (g(0)=1).\]
Now let $q\ge 1$ and let
\[g(z)=\sum_{n=0}^{\infty} b_nz^n\]
be the {\it $($ordinary$)$ generating function} for the number $b_n$ of A$q$C-free permutations in $S_n$. 
Then $g(z)$ satisfies a first-order linear differential equation.

\begin{theorem}\label{th:genfctn}
For every $q\ge 1$, $g(z)$ satisfies the differential equation
\begin{equation}\label{eq:genfctn}
z^2(1+z^q)g'(z) -(1+z^q)(1-z-(q-1)z^q)g(z)+1- (q-1)z^q=0, \quad (g(0)=1).
\end{equation}
\end{theorem}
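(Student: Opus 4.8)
The plan is to convert the recurrence relation of Theorem~\ref{th:recurrence} into a statement about $g(z)$ by the standard device of multiplying (\ref{eq:recurrence}) by $z^n$ and summing over $n\ge 1$. Writing each of the sums that arises as an operation on $g$ should produce the differential equation directly, while the boundary condition $g(0)=b_0=1$ comes for free since $b_0=a(0,0)=1$ counts the empty permutation.

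First I would record the elementary generating-function translations of the three terms appearing in (\ref{eq:recurrence}). Using the convention $b_m=0$ for $m<0$ (consistent with the fact that no A$q$C can be formed from fewer than $q$ symbols, so that the ``otherwise'' branch is an identity for $1\le n<q$), the leading term gives $\sum_{n\ge 1} b_n z^n = g(z)-1$. For the term $n b_{n-1}$, reindexing yields $\sum_{n\ge 1} n b_{n-1}z^n = z\,\frac{d}{dz}\bigl(z g(z)\bigr)=z g(z)+z^2 g'(z)$. For the homogeneous part of the right-hand side, the shift by $q$ contributes a factor $z^q$, so $\sum_{n\ge 1}(q-1)b_{n-q}z^n=(q-1)z^q g(z)$.

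The one step that needs care is the inhomogeneous term $q(-1)^{n/q}$, which is present only when $q\mid n$. Setting $n=qj$ turns its contribution into a geometric series, $\sum_{j\ge1} q(-1)^j z^{qj}=q\sum_{j\ge1}(-z^q)^j=-\,qz^q/(1+z^q)$; this is the source of the denominator $1+z^q$ and hence of the factor $(1+z^q)$ that multiplies $g'$ and $g$ in the final equation. I expect this summation, together with keeping the book-keeping of which terms survive for small $n$ straight, to be the only genuine obstacle; everything else is routine.

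Assembling the four pieces gives
\[
g(z)-1-\bigl(z g(z)+z^2 g'(z)\bigr)=(q-1)z^q g(z)-\frac{qz^q}{1+z^q},
\]
and clearing the denominator by multiplying through by $1+z^q$, then using the simplification $(1+z^q)-qz^q=1-(q-1)z^q$, should yield exactly (\ref{eq:genfctn}). As a sanity check I would verify that substituting $q=1$ reproduces the derangement differential equation $z^2(1+z)g'(z)-(1-z^2)g(z)+1=0$ quoted just before the theorem.
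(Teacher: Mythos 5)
Your proposal is correct and follows exactly the route the paper takes (the paper's proof is the one-line instruction to multiply the recurrence of Theorem~\ref{th:recurrence} by $z^n$ and sum); your version simply fills in the details, and the term-by-term translations, the geometric-series treatment of the $q(-1)^{n/q}$ term, and the final simplification $(1+z^q)-qz^q=1-(q-1)z^q$ all check out. Summing over $n\ge 1$ with the convention $b_m=0$ for $m<0$ is equivalent to the paper's summation over $n\ge q$, since $b_n=n\,b_{n-1}$ for $1\le n<q$.
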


\begin{proof}
Start with the recurrence relation (\ref{eq:recurrence}) satisfied by the $b_n$,
multiply by $z^n$, and then sum over all $n\ge q$.
\end{proof}

Now let
\[G(z)=\sum_{n=1}^{\infty}b_n\frac{z^n}{n!}\]
be the {\it exponential generating function} for the numbers $b_n$ given in (\ref{eq:bformula}).

\begin{theorem}\label{th:expgenfctn}
For every $q\ge 1$, $G(z)$ satisfies the $q$th-order differential equation
\begin{equation}\label{eq:expgenfctn}
(1-z)G^{(q)}(z)-qG^{(q-1)}(z)-(q-1)G(z)=qw^{(q)}(z),\quad
(G^{(i)}(0)=i!\  (i=0,1,\ldots,q-1))\end{equation}
where 
\[w(z)=\sum_{i=1}^{\infty} \frac{(-1)^iz^{qi}}{(qi)!}.\]
\end{theorem}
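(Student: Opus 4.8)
The plan is to read the differential equation off the recurrence (\ref{eq:recurrence}) for $b_n$, exactly as Theorem \ref{th:genfctn} does for the ordinary generating function, but now weighting by $z^n/n!$ so that index shifts become derivatives of $G$. I will use $G^{(k)}(z)=\sum_{n\ge 0}b_{n+k}\,z^n/n!$ and work with the series including the constant term $b_0=1$; this is what the stated initial conditions require, since $G^{(i)}(0)=b_i$. Those initial conditions are then immediate: for $0\le i\le q-1$ a permutation of $[i]$ has fewer than $q$ elements and so contains no $q$-cycle, whence every such permutation is A$q$C-free and $b_i=i!$, giving $G^{(i)}(0)=i!$.

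First I would replace $n$ by $n+q$ in (\ref{eq:recurrence}), which is legitimate for all $n\ge 0$ since it only invokes the recurrence at indices $n+q\ge q\ge 1$. Because $q\mid(n+q)$ iff $q\mid n$ and $(-1)^{(n+q)/q}=-(-1)^{n/q}$, the shifted recurrence reads
\[ b_{n+q}-(n+q)b_{n+q-1}=(q-1)b_n-q(-1)^{n/q}\,[q\mid n], \]
with no negative indices on the right. Multiplying by $z^n/n!$ and summing over $n\ge 0$, I would identify the four resulting series. The first is $G^{(q)}(z)$. Splitting the factor $n+q$ as $n$ plus $q$ breaks the second series into $\sum_{n\ge 0}n\,b_{n+q-1}z^n/n!=zG^{(q)}(z)$ (after re-indexing) plus $q\sum_{n\ge 0}b_{n+q-1}z^n/n!=qG^{(q-1)}(z)$. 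The third is $(q-1)G(z)$. For the last, setting $n=qi$ gives $-q\sum_{i\ge 0}(-1)^iz^{qi}/(qi)!=-q(1+w(z))$, the $i=0$ term contributing the $-q$. Assembling these,
\[ (1-z)G^{(q)}(z)-qG^{(q-1)}(z)-(q-1)G(z)=-q(1+w(z)). \]

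Finally I would match the right-hand side to the forcing term by differentiating $w$ directly $q$ times: term by term $w^{(q)}(z)=\sum_{i\ge 1}(-1)^iz^{q(i-1)}/(q(i-1))!=-\sum_{j\ge 0}(-1)^jz^{qj}/(qj)!=-(1+w(z))$, so $qw^{(q)}(z)=-q(1+w(z))$ is exactly the right-hand side above, and (\ref{eq:expgenfctn}) follows. There is no deep obstacle here; the work is bookkeeping, and the two places needing care are the sign flip of $(-1)^{n/q}$ under the shift $n\mapsto n+q$ and the correct splitting of the $(n+q)b_{n+q-1}$ term into its $zG^{(q)}$ and $qG^{(q-1)}$ pieces. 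The one genuinely clarifying step is recognizing that the constant and derangement-type forcing collapse, via $w^{(q)}=-(1+w)$, into the single term $qw^{(q)}$.
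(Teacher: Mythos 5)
Your proof is correct, and while it starts from the same recurrence (\ref{eq:recurrence}) as the paper, the mechanics are genuinely different and arguably cleaner. The paper splits the recurrence into residue classes modulo $q$ (one equation for $b_{qn}$, one for each $b_{qn+j}$ with $1\le j\le q-1$), divides by factorials, multiplies by $z^{qn}$ or $z^{qn+j}$, and sums; this produces the functional identity (\ref{eq:egf3}) for $G(z)$ involving the auxiliary series $K(z)=\sum_i b_i z^i/\bigl((i+1)(i+2)\cdots(i+q)\,i!\bigr)$ --- essentially the $q$-fold antiderivative form of the ODE --- and the differential equation is then extracted by differentiating $q$ times and using $(z^qK(z))^{(q)}=G(z)$. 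You instead shift the recurrence to index $n+q$ (which also sidesteps the negative-index convention implicitly needed for $1\le n<q$), weight by $z^n/n!$, and read off each sum directly as $G^{(q)}$, $zG^{(q)}$, $qG^{(q-1)}$, $(q-1)G$, or the forcing series; the identity $w^{(q)}=-(1+w)$ then converts $-q(1+w(z))$ into $qw^{(q)}(z)$. Your route avoids the auxiliary $K(z)$ and the $q$-fold differentiation entirely, and it makes the initial conditions $G^{(i)}(0)=b_i=i!$ explicit, which the paper asserts without comment (note that you are right to take the sum defining $G$ from $n=0$; the paper's lower limit $n=1$ is inconsistent with its own stated condition $G(0)=1$). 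The paper's route, in exchange, records the integrated identity (\ref{eq:egf3}), which may be of independent use. All of your index manipulations check out, including the sign flip $(-1)^{(n+q)/q}=-(-1)^{n/q}$ and the split of $(n+q)b_{n+q-1}$ into the $zG^{(q)}$ and $qG^{(q-1)}$ pieces.
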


\begin{proof}
We make use of (\ref{eq:recurrence}). 
We have
\begin{equation}\label{eq:egf1}
\frac{b_{qn}}{(qn)!}=\frac{b_{qn-1}}{(qn-1)!}+\frac{(q-1)}{qn(qn-1)\cdots (qn-q+1)}
\frac{b_{qn-q}}{(qn-q)!}+\frac{q(-1)^n}{(qn)!}.
\end{equation}

\smallskip\noindent
We also have for $j=1,2,\ldots,q-1$,
\begin{equation}\label{eq:egf2}
\frac{b_{qn+j}}{(qn+j)!}=\frac{b_{qn+j-1}}{(qn+j-1)!}+\frac{(q-1)}{(qn+j)(qn+j-1)\cdots (qn-q+j+1)}
\frac{b_{qn-q+j}}{(qn-q+j)!}
\end{equation}

\smallskip\noindent
Multiplying (\ref{eq:egf1}) by $z^{qn}$, and multiplying, for $j=1,2,\ldots,q-1$,  the $j$th equation of (\ref{eq:egf2}) by $z^{qn+j}$, and then summing for $n\ge 0$, we get
\begin{equation}\label{eq:egf3}
G(z)-\sum_{i=0}^{q-1}\frac{b_i}{i!}z^i=z\left(G(z)-\sum_{i=0}^{q-2}\frac{b_i}{i!}z^i\right)
+(q-1)z^qK(z)+q\sum_{i=1}^{\infty}\frac{(-1)^iz^{qi}}{(qi)!},
\end{equation}
 where
 \[K(z)=\sum_{i=0}^{\infty}\frac{b_i}{(i+1)(i+2)\cdots(i+q)}\frac{z^i}{{i!}}.\]
 Differentiating (\ref{eq:egf3}) $q$ times,  and using the fact that
 the $q$th derivative of $z^qK(z)$ equals $ G(z)$, we obtain (\ref{eq:expgenfctn}).
\end{proof}

The function $w(z)$ in Theorem \ref{th:expgenfctn}
can be expressed as a hypergeometric function, namely
\[w(z)=-\frac{z^q}{q!}F\left(1;\frac{q+1}{q},\frac{q+2}{q},\ldots,\frac{2q}{q};-\frac{z^q}{q^q}\right).\]
If $q=1$, we  obtain the known (see e.g. p.~106 in \cite{bona}) exponential generating function for the derangements, namely $G(z)=\frac{e^{-z}}{1-z}$.

\section{Adjacent cycles of more than one length}

Up to now we have considered permutations focusing on adjacent cycles of one given length $q$. There is a natural generalization of some of the results in the previous sections obtained by  replacing $q$ with  a finite set $Q$ of $m\ge 1$ lengths.
Thus let 
\[ Q=\{q_1,q_2,\ldots,q_m\} \mbox{ where } 1\le q_1<q_2<\cdots<q_m .\]
Let
\[a(n;k_1,k_2,\ldots,k_m)=\left|\{\pi\in S_n:\pi\mbox{ has $k_1$ A$q_1$C's, $k_2$ A$q_2$C's, $\ldots$, $k_m$ A$q_m$C's}\}\right|.\]
Note that 
$a(n;k_1,k_2,\ldots,k_m)=0$ if $q_1k_1+q_2k_2+\cdots+q_mk_m>n$.
As for the case of one length, we suppress the lengths $Q=\{q_1,q_2,\ldots,q_m\}$ in our notation.
The following theorem is the multi-length analogue of Theorem \ref{th:formula}.

\begin{theorem}\label{th:formulamulti}
We have
\begin{equation}\label{eq:formulamulti}
a(n;k_1,k_2,\ldots k_m)=\sum_{\{t_1,t_2,\ldots,t_m\ge 0:\sum_{j=1}^mq_jt_j\le n\}}
(-1)^{\sum_{j=1}^m(k_j+ t_j)}
\prod_{j=1}^m
{{t_j}\choose k_j} 
\frac{\left(n-\sum_{j=1}^m (q_j-1)t_j\right)!}{\prod_{j=1}^mt_j!}.
\end{equation}
\end{theorem}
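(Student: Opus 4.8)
The plan is to mimic the proof of Theorem~\ref{th:formula}, replacing the single counting parameter $t$ by a tuple $(t_1,\ldots,t_m)$ and the binomial inversion by its $m$-fold (tensor) analogue. For each tuple of nonnegative integers $\mathbf{t}=(t_1,\ldots,t_m)$ with $\sum_j q_jt_j\le n$, I would define a multiset $\mathcal{M}_n^{\mathbf{t}}$ of permutations of $[n]$ obtained by (i)~selecting $t_j$ A$q_j$C's for each $j$ --- these occupy $\sum_j q_jt_j$ positions of the one-line representation --- and (ii)~filling the remaining $s:=n-\sum_j q_jt_j$ positions in all $s!$ ways. As in the single-length case this is genuinely a multiset, and I would count $|\mathcal{M}_n^{\mathbf{t}}|$ in two ways.

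For the direct count I would run the block/gap argument with blocks of several sizes: a placement of the required blocks corresponds to a string over $\{B_1,\ldots,B_m,E\}$ having $t_j$ copies of $B_j$ (a run of $q_j$ consecutive positions) and $s$ copies of $E$ (a single empty position), and there are $\binom{t_1+\cdots+t_m+s}{t_1,\ldots,t_m,s}$ such strings. Multiplying by the $s!$ fillings and using $t_1+\cdots+t_m+s=n-\sum_j(q_j-1)t_j$ gives
\[|\mathcal{M}_n^{\mathbf{t}}|=\frac{\left(n-\sum_j(q_j-1)t_j\right)!}{\prod_j t_j!}.\]
For the second count I would classify the members of $\mathcal{M}_n^{\mathbf{t}}$ by their true cycle type: a permutation with exactly $i_j$ A$q_j$C's for each $j$ arises precisely $\prod_j\binom{i_j}{t_j}$ times, since for each length one chooses which $t_j$ of its $i_j$ adjacent cycles are the initially placed ones (the adjacent cycles of distinct lengths are automatically disjoint, being distinct cycles of a permutation). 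Equating the two counts yields the multi-indexed triangular system
\[\sum_{\mathbf{i}\ge\mathbf{t}}\prod_j\binom{i_j}{t_j}\,a(n;i_1,\ldots,i_m)=\frac{\left(n-\sum_j(q_j-1)t_j\right)!}{\prod_j t_j!},\]
the sum being over all $\mathbf{i}$ (coordinatewise $\ge\mathbf{t}$) with $\sum_j q_ji_j\le n$.

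To finish I would invert this system. Its coefficient array $\prod_j\binom{i_j}{t_j}$ is the tensor (Kronecker) product of $m$ copies of the binomial matrix $A=\bigl[\binom{j}{i}\bigr]$ from the end of the proof of Theorem~\ref{th:formula}, so its inverse is the corresponding tensor product of the signed binomial matrices $A^{-1}=\bigl[(-1)^{i+j}\binom{j}{i}\bigr]$; equivalently one applies the one-variable inversion $b_k=\sum_{t\ge k}(-1)^{t+k}\binom{t}{k}c_t$ successively in each coordinate. This produces
\[a(n;k_1,\ldots,k_m)=\sum_{\mathbf{t}\ge\mathbf{k}}(-1)^{\sum_j(k_j+t_j)}\prod_j\binom{t_j}{k_j}\cdot\frac{\left(n-\sum_j(q_j-1)t_j\right)!}{\prod_j t_j!},\]
which is \eqref{eq:formulamulti} once one observes that $\binom{t_j}{k_j}=0$ whenever $t_j<k_j$, so the sum may be written over all $\mathbf{t}\ge\mathbf{0}$ with $\sum_j q_jt_j\le n$.

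The main obstacle --- and essentially the only point needing care beyond the single-length argument --- is justifying the $m$-dimensional inversion against the truncated index set $\{\mathbf{i}:\sum_j q_ji_j\le n\}$. The key observation is that this set is an order ideal for the coordinatewise order: if $\mathbf{t}\le\mathbf{i}$ and $\sum_j q_ji_j\le n$, then $\sum_j q_jt_j\le n$. Hence the truncated coefficient matrix is still triangular with unit diagonal, and its inverse is exactly the truncation of the full tensor-product inverse, which is what lets me apply the one-variable inversion coordinate by coordinate with no boundary terms. Everything else is a routine transcription of the bookkeeping in the proof of Theorem~\ref{th:formula}.
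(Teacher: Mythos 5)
Your proposal is correct and follows essentially the same route as the paper: the same two-way count of the multiset $\mathcal{M}_n^{\mathbf{t}}$ yielding the multi-indexed triangular system, followed by coordinatewise binomial inversion. The paper packages the inversion as an explicit orthogonality lemma (Lemma~\ref{lem:trianglemulti}) whose proof factors the sum into $m$ one-variable alternating binomial sums, justified by exactly your ``order ideal'' observation that $\mathbf{t}\le\mathbf{i}$ and $\sum_j q_j i_j\le n$ force $\sum_j q_j t_j\le n$; your Kronecker-product phrasing is the same argument in different language.
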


\begin{proof} The proof is analogous to the proof of Theorem \ref{th:formula} and so we omit some of the details. Moreover, we illustrate the proof only in the case that $m=3$
from which it is clear how to proceed in general.
For $m=3$ we write (\ref{eq:formulamulti}) in the form
\begin{equation}\label{eq:formulamulti2}
\footnotesize
a(n;i,j,k)=\sum_{\{s,t,u\ge 0: ps+qt+ru\le n\}}
(-1)^{(i+j+k)+(s+t+u)}
{{s}\choose i}{{t}\choose j}{{u}\choose k}
\frac{\left(n-(p-1)s-(q-1)t-(r-1)u\right)!}{s!t!u!}.
\end{equation}

Corresponding to  the triangular system (\ref{eq:newtriangle}) we obtain in a similar way the  system of equations valid for all $s,t,u$ with $ps+qt+ru\le n$:

\begin{equation}\label{eq:trianglemulti}
\sum_{\{\alpha,\beta,\gamma\ge 0: p\alpha+q\beta+r\gamma\le n\}}
{{\alpha}\choose s}{{\beta}\choose t}{{\gamma}\choose u}a(n;\alpha,\beta,\gamma)
=\frac{\left(n-(p-1)s-(q-1)t-(r-1)u\right)!}{s!t!u!}.\end{equation}

To find the solution of this system, we require the following lemma.

\begin{lemma}\label{lem:trianglemulti}
Let $n,p,q,r$ be nonnegative integers with $p,q,r\le n$, and let
$\alpha,\beta,\gamma$ be nonnegative integers with $p\alpha+q\beta+r\gamma\le n$.
Then
\begin{equation}
\label{eq:multilem}
\sum_{\{s,t,u\ge 0:ps+qt+ru\le n\}}(-1)^{s+t+u}{s\choose i}{{\alpha}\choose s}{t\choose j}{{\beta}\choose t}{u\choose k}{{\gamma}\choose u}=
\left\{\begin{array}{ll}
(-1)^{i+j+k}&\mbox{ if $(\alpha,\beta,\gamma)=(i,j,k)$}\\
0&\mbox{ otherwise.}\end{array}\right.
\end{equation}\end{lemma}

\smallskip\noindent
{\it Proof of Lemma \ref{lem:trianglemulti}.}
The product
\[{s\choose i}{{\alpha}\choose s}{t\choose j}{{\beta}\choose t}{u\choose k}{{\gamma}\choose u}\]
is nonzero if and only if $i\le s\le \alpha$, $j\le t\le \beta$, and $k\le u\le \gamma$. All these triples belong to the summation in (\ref{eq:multilem}) because $ps+qt+ru\le p\alpha+q\beta+r\gamma\le n$. Thus the left hand side of (\ref{eq:multilem}) can be rewritten as a product of three sums:
\[
\left(\sum_{s=i}^{\alpha} (-1)^{s}{s\choose i}{{\alpha}\choose s}\right)
\left(\sum_{t=j}^{\beta} (-1)^t{t\choose j}{{\beta}\choose t}\right)
\left(\sum_{u=k}^{\gamma}(-1)^u{u\choose k}{{\gamma}\choose u}\right).\]
Considering the first sum in this product, we get
\[\sum_{s=i}^{\alpha} (-1)^{s}{s\choose i}{{\alpha}\choose s}
=(-1)^i{{\alpha}\choose i} \sum_{s=i}^{\alpha}(-1)^{s-i}{{\alpha-i}\choose {s-i}}=\left\{\begin{array}{ll}
0&\mbox{ if  $\alpha\ne i$}\\
(-1)^i&\mbox{ if $\alpha=i$,}\end{array}\right.\]
since the last summation leads to the full alternating sum of the binomial coefficients of the form ${{\alpha-i}\choose {\ast}}$. The other two sums in the product are similarly evaluated giving (\ref{eq:multilem}),  and thus proving the lemma.

Returning to the proof of the theorem, 
we multiply both sides of (\ref{eq:trianglemulti}) by 
\[(-1)^{i+j+k+s+t+u}{s\choose i}{t\choose j}{u\choose k}\]
 and then sum over all triples $(s,t,u)$ with $s,t,u\ge 0$ and $ps+qt+ru\le n$.  The result on the right hand side is the same as the right side of (\ref{eq:formulamulti2}), while on the left hand side we get
\[{\footnotesize
\sum_{\{s,t,u\ge 0: ps+qt+ru\le n\}}
(-1)^{(i+j+k)+(s+t+u)}
{{s}\choose i}{{t}\choose j}{{u}\choose k}
\sum_{\{\alpha,\beta\gamma\ge 0, p\alpha+q\beta+r\gamma\le n\}}
{{\alpha}\choose s}{{\beta}\choose t}{{\gamma}\choose u}a(\alpha,\beta,\gamma)},
\]
and this equals
\[(-1)^{i+j+k}\sum_{\{\alpha,\beta,\gamma\ge 0, p\alpha+q\beta+r\gamma\le n\}}
a(\alpha,\beta,\gamma) \sum_{\{s,t,u\ge 0: ps+qt+ru\le n\}}
(-1)^{s+t+u}
{{s}\choose i}{{t}\choose j}{{u}\choose k}
{{\alpha}\choose s}{{\beta}\choose t}{{\gamma}\choose u},\]
or $a(i,j,k)$ in view of Lemma (\ref{lem:trianglemulti}).
\end{proof}

By taking $k_1=k_2=\cdots=k_m=0$ in (\ref{eq:formulamulti}), we obtain the number of permutations in $S_n$ having no adjacent cycles of any of the lengths $q_1,q_2,\ldots,q_m$, namely,
\[a(n;0,0,\ldots,0)=
\sum_{\{t_1,t_2,\ldots,t_m\ge 0:\sum_{j=1}^mq_jt_j\le n\}}
(-1)^{\sum_{j=1}^mt_j}
\frac{\left(n-\sum_{j=1}^m (q_j-1)t_j\right)!}{\prod_{j=1}^mt_j!}.\]
If we take $(q_1,q_2,\ldots,q_k)=(1,2,\ldots,k)$, then we obtain
\[a(n;0,0,\ldots,0)=
\sum_{\{t_1,t_2,\ldots,t_m\ge 0:\sum_{j=1}^m jt_j\le n\}}
(-1)^{\sum_{j=1}^mt_j}
\frac{\left(n-\sum_{j=1}^m (j-1)t_j\right)!}{\prod_{j=1}^mt_j!}.\]
These numbers can be viewed as {\it restricted derangement numbers} as the permutations they count are, in particular, derangements.

\section{Concluding remarks}

It is well known that, for a fixed $n$, the rencontre numbers $r(n,k)$
can be calculated using permanents. Specifically, let $J_n$ be the all 1s matrix of order $n$, and let $I_n$ be the identity matrix of order $n$. Then
\[\mbox{per}\left(xI_n+(J_n-I_n)\right)=\sum_{k=0}^n r(n,k)x^{n-k}\]
is the generating function for the numbers $r(n,0)=d_n, r(n,1),r(n,2),\ldots,r(n,n)=1$ (see \cite{riordan}, pp. 59, 184). 
By constructing appropriate matrices, we can also use permanents to 
obtain, for fixed $n$ and $q$, the generating polynomial for the numbers $a(n,0),a(n,1),a(n,2),\ldots,a(n,\left\lfloor \frac{n}{q}\right\rfloor)$. This procedure is more complicated and, as a result,
we do not formalize this but only illustrate the technique by a few examples.

\smallskip\noindent
{\bf Example.} Consider $n=6$ and $q=3$ and the matrix
\[\left[\begin{array}{cccccc}
1&x_1&1&1&1&1\\
1&1&x_1x_2&1&1\\
x_1&1&1&x_2x_3&1&1\\
1&x_2&1&1&x_3x_4&1\\
1&1&x_3&1&1&x_4\\
1&1&1&x_4&1&1\end{array}\right],\]
obtained by introducing an indeterminate for each A3C and placing it in the associated positions of the matrix. Calculating the permanent ($6!=720$ terms), we get a polynomial $p(x_1,x_2,x_3,x_4)$ in $x_1,x_2,x_3,x_4$.  In order to obtain the number of terms in this permanent which use all three $x_j$'s for each $j$, we first make the substitutions
$x_1^3=x$,  $x_2^3=x$,  $x_3^3=x$, $x_4^3=x$ in $p(x_1,x_2,x_3,x_4)$, and then make the substitutions $x_1=1$, $x_2=1$, $x_3=1$, and $x_4=1$.  This yields  the degree 3 polynomial 
\[697+22x+x^2.\]
Here $a(6,0)=697$, $a(6,1)=22$, and $a(6,2)=1$, That $a(6,2)=1$ 
follows since the only permutation  in $S_6$  with two  A3C's is $(123)(456)$. We verify that $a(6,1)=22$ as follows: 
There are 5 permutations whose unique A3C is (123), 6 permutations whose unique A3C is (234), 6 permutations whose unique A3C is(345), and 5 permutations whose unique A3C is (456). The remaining
$6!-1-22=697$ permutations have no A3C's.

In case we consider adjacent cycles of several lengths, that is, $Q=\{q_1,q_2,\ldots,q_m\}$, we can extend
the above procedure to obtain a multivariable generating polynomial for the numbers $a(n;k_1,k_2,\ldots,k_m)$. To illustrate this technique, let $n=5$ and let $Q=\{1,2,3,4,5\}$ (all possible lengths of adjacent cycles of  permutations in $S_5$). Consider the matrix 
\[\left[\begin{array}{ccccc}
x&y_1z_1u_1v_1&1&1&1\\
y_1&x&y_2z_1z_2u_1u_2v_1&1&1\\
z_1&y_2&x&y_3z_2z_3u_1u_2v_1&1\\
u_1&z_2&y_3&x&y_4z_3u_2v_1\\
v_1&u_2&z_3&y_4&x\end{array}\right].\]
The $x$'s, $y_j$'s, $z_j$'s, $u_j$'s, $v_1$ correspond to 
A1C's, A2C's, A3C's, A4C's, and A5C's, respectively. Let
$p(x,y_1,y_2,\ldots,v_1)$
be the multivariate polynomial obtained by calculating the permanent of this matrix.
Replacing each $y_j^2$ with $y$, each $z_j^3$ with $z$, each $u_j^4$ with $u$, and $v_1^5$ with $v$, and then in the resulting polynomial,  replacing each remaining subscripted variable with 1,
we obtain
\[f(x,y,z,u,v)=34+34x+6y+z+v+17x^2+6xy+2xu+2yz+6x^3+3xy^2+3x^2z+4x^3y+x^5,\]
the 5-variate generating polynomial for $S_5$ where $x,y,z,u,v$ mark the A1C's, A2C's, A3C's, A4C's, and A5C's, respectively. For example,
$3xy^2$ counts the permutations $(1)(23)(45)$, $(12)(3)(45)$,  and $(12)(34)(5)$, while $z$ counts the permutation $(15)(234)$.

\end{document}